\newlength{\abovebis} 
\newlength{\belowbis} 
\newlength{\aboveshortbis} 
\newlength{\belowshortbis} 
\everydisplay\expandafter{%
  \the\everydisplay 
  \advance\abovedisplayskip\abovebis 
  \advance\belowdisplayskip\belowbis 
  \advance\abovedisplayshortskip\aboveshortbis 
  \advance\belowdisplayshortskip\belowshortbis 
}
\def\R{\mathbb{R}}
\def\N{\mathbb{N}}
\def\C{\mathbb{C}}
\def\Ree{\mathrm{Re}}
\def\Imm{\mathrm{Im}}
\def\supp{\mathrm{supp}\,}
\def\E{\mathcal{E}}
\def\SE{\Sigma_E}
\theoremstyle{plain}
\newtheorem{lem}{Lemma}[section]
\newtheorem{theo}[lem]{Theorem}
\newtheorem{prop}[lem]{Proposition}
\theoremstyle{definition}
\newtheorem{rem}{Remark}[section]
\numberwithin{equation}{section}
\begin{document}
\title[Stability estimates in 2D at negative energy]{Stability estimates for an inverse problem for the Schr\"odinger equation at negative energy in two dimensions}
\author{Matteo Santacesaria}
\address[M. Santacesaria]{Centre de Mathématiques Appliquées -- UMR 7641, \'Ecole Polytechnique, 91128, Palaiseau, France}
\email{santacesaria@cmap.polytechnique.fr}
\begin{abstract}
We study the inverse problem of determining a real-valued potential in the two-dimensional Schr\"odinger equation at negative energy from the Dirichlet-to-Neumann map. It is known that the problem is ill-posed and a stability estimate of logarithmic type holds. In this paper we prove three new stability estimates. The main feature of the first one is that the stability increases exponentially with respect to the smoothness of the potential, in a sense to be made precise. The others show how the first estimate depends on the energy, for low and high energies (in modulus). In particular it is found that for high energies the stability estimate changes, in some sense, from logarithmic type to Lipschitz type: in this sense the ill-posedness of the problem decreases when increasing the energy (in modulus).
\end{abstract}

\maketitle

\section{Introduction}
The problem of the recovery of a potential in the Schr\"odinger equation from boundary measurements, the Dirichlet-to-Neumann map, has been studied since the 1980s, namely in connection with Calder\'on's inverse conductivity problem. The aim of this paper is to give new insights about its stability issues.

It is well known that the problem is ill-posed: Alessandrini \cite{A} proved that a logarithmic stability holds and Mandache \cite{M} showed that it was optimal, in some sense. Nevertheless, Mandache's result provided also the information that stability could be increased in a way depending on the smoothness of potentials. Optimal stability estimates, with respect to smoothness of potentials, were indeed recently obtained in \cite{N2} and \cite{S} in dimensions $d \geq 3$ and $d=2$, respectively (at zero energy). However, even for smooth potentials the problem remains ill-posed.

It was observed that one way to increase stability is to modify another factor in the equation: the energy. Indeed, at high energies the ill-posedness diminishes considerably: this motivated some rapidly converging approximation algorithms in two and three dimensions \cite{N4}, \cite{N6}, \cite{NS2} and stability estimates of Lipschitz-logarithmic type explicitly depending on the energy in three dimensions \cite{Isa}.

In this paper we continue the work started in \cite{S}, at zero energy (the Calder\'on problem), and give new stability estimates depending on the smoothness of potentials and the energy. We restricted ourself to the negative energy case, for the simplicity of the proofs. Results for the positive energy case are indeed similar in many respects and will be published in a subsequent paper.\smallskip

We consider the Schr\"odinger equation at fixed energy $E$,
\begin{equation} \label{equa}
(-\Delta + v)\psi = E \psi \quad \text{on } D, \quad E \in \R,
\end{equation}
where $D$ is a open bounded domain in $\R^2$ and $v \in L^{\infty}(D)$ (we will refer to $v$ as a \textit{potential}).
Under the assumption that
\begin{equation} \label{direig}
0 \textrm{ is not a Dirichlet eigenvalue for the operator } - \Delta + v -E\textrm{ in } D,
\end{equation}
we can define the Dirichlet-to-Neumann operator $\Phi(E) :H^{1/2}(\partial D) \to H^{-1/2}(\partial D)$, corresponing to the potential $v$, as follows:
\begin{equation} \label{defdtn1}
\Phi(E)f = \left. \frac{\partial u}{\partial \nu}\right|_{\partial D},
\end{equation}
where $f \in H^{1/2}(\partial D)$, $\nu$ is the outer normal of $\partial D$, and $u$ is the $H^1(D)$-solution of the Dirichlet problem
\begin{equation}
(-\Delta + v)u = Eu \; \textrm{on} \; D, \; \; \; u|_{\partial D}=f.
\label{schr}
\end{equation}
The following inverse problem arises from this construction.

\textbf{Problem 1.} Given $\Phi(E)$ for a fixed $E \in \R$, find $v$ on $D$.

This problem can be considered as the Gel’fand inverse boundary value problem for the two-dimensional Schr\"odinger equation at fixed energy (see \cite{G}, \cite{N1}). At zero energy this problem can be seen also as a generalization of the Calder\'on problem of the electrical impedance tomography (see \cite{C}, \cite{N1}).
In addition, the history of inverse problems for the two-dimensional Schr\"odinger equation at fixed energy goes back to \cite{DKN} (see also \cite{N3, Gr} and reference therein). Problem 1 can also be considered as an example of ill-posed problem: see \cite{LRS}, \cite{BK} for an introduction to this theory. 

Note that this problem is not overdetermined, in the sense that we consider the reconstruction of a function $v$ of two variables from inverse problem data dependent on two variables.\smallskip

In this paper we study interior stability estimates, i.e. we want to prove that given two Dirichlet-to-Neumann operators $\Phi_1(E)$ and $\Phi_2(E)$, corresponding to potentials $v_1$ and $v_2$ on $D$, we have that
\begin{equation} \nonumber
\|v_1 - v_2\|_{L^{\infty}(D)} \leq \omega \left( \| \Phi_1(E) - \Phi_2(E)\|_{H^{1/2}(\partial D) \to H^{-1/2}(\partial D)}\right),
\end{equation}
where the function $\omega(t) \to 0$ as fast as possible as $t \to 0$ at any fixed $E$. The explicit dependence of $\omega$ on $E$ is analysed as well.

There is a wide literature on the Gel'fand inverse problem at fixed energy (i.e. Problem 1 in multidimensions). In the case of complex-valued potentials the global injectivity of the map $v \to \Phi$ was firstly proved in \cite{N1} for $D \subset \R^d$ with $d \geq 3$ and in \cite{B} for $d = 2$ with $v \in L^p$: in particular, these results were obtained by the use of global reconstructions developed in the same papers. A global stability estimate for Problem 1 for $d \geq 3$ was first found by Alessandrini in \cite{A}; a principal improvement of this result was given recently in \cite{N2}. In the two-dimensional case the first global stability estimate was given in \cite{NS}.
Note that for the Calder\'on problem (of the electrical impedance tomography) in its initial formulation the global uniqueness was firstly proved in \cite{SU} for $d \geq 3$ and in \cite{Na2} for $d = 2$. In addition, for the case of piecewise constant or piecewise real analytic conductivity the first uniqueness results for the Calder\'on problem in dimension $d \geq 2$ were given in \cite{D} and \cite{KV2}. In the case of piecewise constant conductivities a Lipschitz stability estimate was proved in \cite{AV} (see \cite{R} for additional studies in this direction).

Most stability results for the Calder\'on problem in two dimensions have been formulated with the goal of proving stability estimates using the least regular conductivities possible (see \cite{L}, \cite{BFR}). Instead, we have tried to address different questions: how the estimates vary with respect to the smoothness of the potentials and the energy.

The results, detailed below, constitute also a progress in the non-smooth case: they indicate stability dependence of the smooth part of a singular potential with respect to boundary value data.\smallskip

We will assume for simplicity that
\begin{equation} \label{cv1}
\begin{split}
&D \text{ is an open bounded domain in } \R^2, \qquad \partial D \in C^2, \\
&v \in W^{m,1}(\R^2) \text{ for some } m > 2, \quad \bar v = v, \quad \mathrm{supp} \; v \subset D,
\end{split}
\end{equation} 
where
\begin{align}
&W^{m,1}(\R^2) = \{ v \; : \; \partial^J v \in L^1(\R^2),\; |J| \leq m \}, \qquad m \in \N \cup \{0\},\\ \nonumber
&J \in (\N \cup \{0\})^2, \qquad |J| = J_1+J_2, \qquad \partial^J v(x) = \frac{\partial^{|J|} v(x)}{\partial x_1^{J_1} \partial x_2^{J_2}}.
\end{align}
Let
\begin{equation} \nonumber
\|v\|_{m,1} = \max_{|J| \leq m} \| \partial^J v \|_{L^1(\R^2)}.
\end{equation}
We will need the following regularity condition:
\begin{equation} \label{cv2}
|E| > E_1, 
\end{equation}
where $E_1 = E_1(\|v\|_{m,1},D)$. This condition implies, in particular, that the Faddeev eigenfunctions are well-defined on the entire fixed-energy surface in the spectral parameter.

\begin{theo} \label{maintheo}
Let the conditions \eqref{direig}, \eqref{cv1}, \eqref{cv2} hold for the potentials $v_1, v_2$, where $D$ is fixed, and let $\Phi_1(E)$ , $\Phi_2(E)$ be the corresponding Dirichlet-to-Neumann operators at fixed negative energy $E < 0$. Let $\|v_j\|_{m,1} \leq N$, $j=1,2$, for some $N >0$. Then there exists a constant $c_1 = c_1(E,D, N,m)$ such that
\begin{equation} \label{est1}
\| v_2 - v_1\|_{L^{\infty}(D)} \leq c_1(\log(3 + \| \Phi_2(E) - \Phi_1(E) \|_*^{-1} ))^{-\alpha},
\end{equation}
where $\alpha = m-2$ and $\| \Phi_2 - \Phi_1 \|_* = \| \Phi_2 - \Phi_1 \|_{H^{1/2}(\partial D) \to H^{-1/2}(\partial D)}$.

Moreover, there exists a constant $c_2 = c_2(D,N,m,p)$ such that for any $0 < \kappa < 1/(l+2)$, where $l = \mathrm{diam}( D)$, we have
\begin{align} \label{est2}
\|v_2 - v_1\|_{L^{\infty}(D)}&\leq c_2 \bigg[\left(|E|^{1/2}+\kappa \log(3+\delta^{-1})\right)^{-(m-2)}\\ \nonumber
&\quad +  \delta (3+\delta^{-1})^{\kappa(l+2)}e^{|E|^{1/2}(l+3)}\bigg],
\end{align}
where $\delta = \|\Phi_2(E)-\Phi_1(E)\|_*$.

In addition, there exists a constant $c_3 = c_3(D,N,m,p)$ such that for $E, \delta$ which satisfy
\begin{equation} \label{hh2}
|E|^{1/2} > \log(3+\delta^{-1}), \quad |E| > 1,
\end{equation}
we have
\begin{align} \label{est3}
\| v_2 - v_1\|_{L^{\infty}(D)} \leq c_3 \bigg[|E|^{-(m-2)/2}\log(3+\delta^{-1})^{-(m-2)} + \delta e^{|E|(l+3)}\bigg].
\end{align}
\end{theo}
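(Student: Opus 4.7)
The plan is to adapt the $\bar\partial$-method used in \cite{S} at zero energy to the negative-energy setting, tracking the $E$-dependence throughout; all three estimates will arise from a single scheme with three different optimizations of a cutoff parameter $\rho$ in the spectral variable.

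I would start from the Faddeev exponentially growing solutions $\psi(x,\lambda)=e^{ik(\lambda)\cdot x}\mu(x,\lambda)$, with $\lambda\in\C\setminus\{0\}$ parametrising the fixed-energy surface $\{k\in\C^2:k\cdot k=E\}$, and the associated generalized scattering function $h(\lambda)$. By assumption \eqref{cv2} the Faddeev functions are free of exceptional points on the whole surface and $\mu-1$ is controlled in suitable weighted norms; the phase satisfies $|\psi(x,\lambda)|\le Ce^{(|\lambda|+c|E|^{1/2})l}$ with $l=\mathrm{diam}\, D$. An Alessandrini-type boundary identity combined with $H^{1/2}$-bounds for $\psi_j|_{\partial D}$ then yields
\begin{equation} \nonumber
|h_1(\lambda)-h_2(\lambda)|\le C\,e^{2(|\lambda|+c|E|^{1/2})l}\,\|\Phi_1(E)-\Phi_2(E)\|_*,
\end{equation}
while the a priori bound $\|v_j\|_{m,1}\le N$, via a Neumann/resolvent argument for the $\bar\partial$-equation satisfied by $h$, forces
\begin{equation} \nonumber
|h_j(\lambda)|\le C\,N\,(|\lambda|+|E|^{1/2})^{-m}
\end{equation}
(in an appropriate, possibly averaged, sense).

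The potential is then recovered from the large-$\lambda$ asymptotics of $\mu$, which I would write schematically as $v_2(x)-v_1(x)=I_\rho(x)+J_\rho(x)$ with $I_\rho$ an integral of a kernel against $h_2-h_1$ over $\{|\lambda|\le\rho\}$ and $J_\rho$ the complementary tail. Inserting the two bounds above one gets $|I_\rho|\le C\rho^{2}\,e^{2(\rho+|E|^{1/2})l}\delta$ and $|J_\rho|\le C(\rho+|E|^{1/2})^{-(m-2)}$. Estimate \eqref{est1} follows by choosing $\rho=\beta\log(3+\delta^{-1})$ with $\beta<1/(2l)$ and absorbing $e^{c|E|^{1/2}l}$ into $c_1=c_1(E,D,N,m)$. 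For \eqref{est2} one takes $\rho=\kappa\log(3+\delta^{-1})$ with $\kappa<1/(l+2)$ and keeps the energy dependence explicit: the tail term produces $(\kappa\log(3+\delta^{-1})+|E|^{1/2})^{-(m-2)}$, while the boundary part accounts for $\delta(3+\delta^{-1})^{\kappa(l+2)}e^{|E|^{1/2}(l+3)}$. For \eqref{est3}, in the regime \eqref{hh2}, I choose $\rho\sim|E|^{1/2}$; then $|E|^{1/2}$ dominates $\rho$ in the denominator, and a geometric-mean split between $|E|^{1/2}$ and $\log(3+\delta^{-1})$ in the factor $(\rho+|E|^{1/2})^{-(m-2)}$ produces the first summand $|E|^{-(m-2)/2}\log(3+\delta^{-1})^{-(m-2)}$, while the boundary part becomes $\delta e^{|E|(l+3)}$.

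The main obstacle is to obtain the scattering-data decay with the \emph{combined} variable $(|\lambda|+|E|^{1/2})^{-m}$, rather than the naive $|\lambda|^{-m}$ one gets from a pure $\bar\partial$-analysis at zero energy: it is precisely this combined form that allows the strong improvement \eqref{est3} at high energy. This in turn requires uniform control on the Faddeev Green function and on the solvability of the $\bar\partial$-equation for $h(\lambda)$ down to $\lambda=0$, uniformly for $|E|>E_1$; ruling out exceptional points and keeping $\|\mu-1\|$ small in this whole parameter range is the delicate technical core of the proof, and is where hypothesis \eqref{cv2} is used crucially.
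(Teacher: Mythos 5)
Your overall architecture --- Faddeev solutions on the surface $k^2=E$, an Alessandrini-type identity for the scattering data, a priori decay of the data from $\|v_j\|_{m,1}\le N$, a cutoff in $\lambda$ optimized three ways, and a $\bar\partial$-reconstruction --- is exactly the paper's (Lemmas \ref{lemb}, \ref{lemdifh}, Proposition \ref{proprdir}, Lemma \ref{lemestmu}). But two of your key quantitative inputs are wrong in ways that break the argument. First, at fixed $E\neq 0$ the parametrization $k(\lambda)=\big(\tfrac{\sqrt{E}}{2}(\lambda+\lambda^{-1}),\tfrac{i\sqrt{E}}{2}(\lambda^{-1}-\lambda)\big)$ blows up as $\lambda\to 0$ as well as $\lambda\to\infty$, so the Faddeev solutions grow like $e^{c\sqrt{|E|}\,l(|\lambda|+1/|\lambda|)}$ and the boundary-identity bound is \eqref{estdifh}, with $1/|\lambda|$ in the exponent --- not your $e^{2(|\lambda|+c|E|^{1/2})l}$. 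Consequently your inner region $\{|\lambda|\le\rho\}$ contains a neighbourhood of $\lambda=0$ where the claimed bound $|I_\rho|\le C\rho^2e^{2(\rho+|E|^{1/2})l}\delta$ is false; you need the paper's three-region decomposition $|\lambda|<a_1$, $a_1<|\lambda|<a_2$, $|\lambda|>a_2$ with $a_1=1/a_2$, using the a priori decay of $b$ (which holds both for small and large $|\lambda|$, Lemma \ref{lemb}) on the two outer regions and the boundary data only on the annulus.

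Second, your a priori decay $(|\lambda|+|E|^{1/2})^{-m}$ is the wrong (strictly weaker) combined form: the true bound is the product form of Lemma \ref{lemb}, roughly $\|\hat v\|_m\,|E|^{-m/2}(|\lambda|+1/|\lambda|)^{-m}$, i.e.\ decay in $|p|\sim\sqrt{|E|}\,(|\lambda|+1/|\lambda|)$. This matters precisely for \eqref{est3}: with your sum form and $\rho\sim|E|^{1/2}$ the tail is of order $|E|^{-(m-2)/2}$, and no ``geometric-mean split'' under \eqref{hh2} can upgrade this to $|E|^{-(m-2)/2}\log(3+\delta^{-1})^{-(m-2)}$; AM--GM only yields halved exponents such as $|E|^{-(m-2)/4}\log(3+\delta^{-1})^{-(m-2)/2}$, and the missing factor $\log(3+\delta^{-1})^{m-2}$ is unbounded as $\delta\to 0$. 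The paper instead takes $a_2=1/a_1=\log(3+\delta^{-1})$, obtains the tail $|E|^{-m/2}\log(3+\delta^{-1})^{-(m-2)}$ from the product form, and uses \eqref{hh2} only to bound the annulus term $e^{(\sqrt{|E|}l+2)a_2}\le e^{|E|(l+2)}$. (A more minor point: $v_2-v_1$ is not a fixed kernel integrated against $h_2-h_1$; the paper first proves stability of $\mu$ and $\partial_{\bar z}\mu$ in $L^{\tilde s}$ of the $\lambda$-variable in terms of $\|r_2-r_1\|_{L^s}$ via the $\bar\partial$-lemma of \cite{BBR}, and only then applies the reconstruction formula \eqref{expl}.)
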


The novelty of estimate \eqref{est1}, with respect to \cite{NS}, is that, as $m \to +\infty$, we have $\alpha \to +\infty$. Moreover, under the assumption of Theorem \ref{maintheo}, according to instability estimates of Mandache \cite{M} and Isaev \cite{I}, our result is almost optimal. To be more precise, it was proved that stability estimate \eqref{est1} cannot hold for $\alpha > 2m$ for real-valued potentials and $\alpha > m$ for complex-valued potentials. Indeed, our estimates are still valid for complex-valued potentials, if $|E|$ is sufficiently large with respect to $\|v\|_{C(\bar D)}$.

In addition, estimate \eqref{est1} extends the result obtained in \cite{S} for the same problem at zero energy. In dimension $d \geq 3$ a global stability estimate similar to \eqref{est1} was proved in \cite{N2}, at zero energy.\smallskip

As regards \eqref{est2} and \eqref{est3}, their main feature is the explicit dependence on the energy $E$. These estimates consist each one of two parts, the first logarithmic and the second H\"older or Lipschitz; when $|E|$ increases, the logarithmic part decreases and the H\"older/Lipschitz part becomes dominant. 

These estimates, namely \eqref{est3}, are coherent with the approximate reconstruction algorithm developed in \cite{N4} and \cite{NS2} at positive energy. In fact, inequalities like \eqref{est1}, \eqref{est2} and \eqref{est3} should be valid also for the Schr\"odinger equation at positive energy.

Note that, for Problem 1 in three dimensions, global energy-dependent stability estimates changing from logarithmic type to Lipschitz type for high energies were given recently in \cite{Isa}. However these estimates are given in the $L^2(D)$ norm and without any dependence on the smoothness of the potentials.\smallskip

The proof of Theorem \ref{maintheo} follows the scheme of \cite{S} and it is based on the same $\bar \partial$ techniques.
The map $\Phi(E) \to v(x)$ is considered as the composition of $\Phi(E) \to r(\lambda)$ and $r(\lambda) \to v(x)$, where $r(\lambda)$ is a complex valued function, closely related to the so-called generalised scattering amplitude (see Section \ref{sec2} for details).

The stability of $\Phi(E) \to r(\lambda)$ -- previously known only for $E =0$ -- relies on an identity of \cite{N5} (based in particular on \cite{A}), and estimates on $r(\lambda)$ for $\lambda$ near $0$ and $\infty$. The estimate is of logarithmic type, with respect to $\Phi$ (at fixed $E$): it is proved in section \ref{secphir}. Note that the results of this section are valid also for positive energy.

The stability of $r(\lambda) \to v(x)$ is of H\"older type and follows the same arguments as in \cite[Section 4]{S}. The composition of the two above-mentioned maps gives the result of Theorem \ref{maintheo}, as showed in Section 4.

\begin{rem}
We point out another possible approach to obtain inequality \eqref{est1}. The approach is based on the following observation (which follows from \cite[Basic Lemma]{GN}): for potentials $v$ satisfying the assumptions of Theorem \ref{maintheo} we have that $v-E$ is of conductivity type, i.e. there exists a positive real-valued function $\psi_0 \in L^{\infty}(D)$ bounded from below such that
\begin{equation}
v-E = \frac{\Delta \psi_0}{\psi_0}.
\end{equation}
Thus Problem 1 at fixed negative energy is reduced to the the same problem at zero energy for the conductivity-type potential $\frac{\Delta \psi_0}{\psi_0}$. It is then possible to apply the result of \cite{S} and find the same stability estimate.
\end{rem}
\begin{rem}
In a similar way as in \cite{IN}, the stability estimates of Theorem \ref{maintheo} can be extended to the case when we do not assume that condition \eqref{direig} is fulfilled and consider the Cauchy data set instead of the Dirichlet-to-Neumann map $\Phi(E)$.
\end{rem}

This work was fulfilled in the framework of research carried out under the supervision of R.G. Novikov.

\section{Preliminaries} \label{sec2}
We recall the definition of the Faddeev eigenfunctions $\psi(x,k)$ of equation \eqref{equa}, for $x=(x_1,x_2) \in \R^2$, $k=(k_1,k_2) \in \SE \subset \C^2$, $\SE = \{ k \in \C^2 : k^2 = k_1^2 + k_2^2 = E\}$ for $E \neq 0$ (see \cite{F}, \cite{N3}, \cite{Gr}). We first extend $v \equiv 0$ on $\R^2 \setminus D$ and define $\psi(x,k)$ as the solution of the following integral equation:
\begin{align} \label{inteq}
\psi(x,k) &= e^{ikx}+\int_{y \in \R^2} G(x-y, k)v(y)\psi(y,k) dy,\\ \label{greenfad}
G(x,k)&=g(x,k)e^{ikx}, \\
g(x,k)&=-\left(\frac{1}{2 \pi}\right)^2 \int_{\xi \in \R^2} \frac{e^{i\xi x}}{\xi^2+2k\xi}d\xi,
\end{align}
where $x \in \R^2$, $k\in \SE \setminus \R^2$. It is convenient to write \eqref{inteq} in the following form
\begin{equation} \label{inteq2}
\mu(x,k) = 1 + \int_{y \in \R^2} g(x-y,k)v(y)\mu(y,k)dy,
\end{equation}
where $\mu(x,k)e^{ikx} = \psi(x,k)$.

We define $\E_E \subset \SE \setminus \R^2$ the set of exceptional points of integral equation \eqref{inteq2}: $k \in \SE \setminus (\E_E \cup \R^2)$ if and only if equation \eqref{inteq2} is uniquely solvable in $L^{\infty}(\R^2)$.

\begin{rem} \label{reme}
From \cite[Proposition 1.1]{N4} we have that there exists $E_0 = E_0(\|v\|_{m,1},D)$ such that for $|E| \geq E_0(\|v\|_{m,1},D)$ there are no exceptional points for equation \eqref{inteq2}, i.e. $\E_E = \emptyset$: thus the Faddeev eigenfunctions exist (unique) for all $k \in \SE \setminus \R^2$.
\end{rem}

Following \cite{GN}, \cite{N3}, we make the following change of variables
\begin{equation} \nonumber
z = x_1 + i x_2, \qquad \lambda = \frac{k_1+ik_2}{\sqrt{E}}, 
\end{equation}
and write $\psi, \mu$ as functions of these new variables. For $k \in \SE \setminus (\E_E \cup \R^2)$ we can define, for the corresponding $\lambda$, the following generalised scattering amplitude:
\begin{align}
b(\lambda,E) &= \frac{1}{(2 \pi)^2} \! \! \int_{\C} \exp \bigg[\frac i 2 \sqrt{E}\left(1+(\mathrm{sgn}\,E)\frac{1}{\lambda \bar \lambda} \right)\\ \nonumber
&\quad \times \left( (\mathrm{sgn}\,E)z\bar \lambda + \lambda \bar z\right)\bigg]v(z)\mu(z,\lambda)d\Ree z\,d\Imm z.
\end{align}
This function plays an important role in the inverse problem because of the following $\bar \partial$-equation, which holds when $v$ is real-valued (see \cite{N3} for more details):
\begin{equation}\label{dbar}
\frac{\partial}{\partial \bar \lambda}\mu(z,\lambda) = r(z,\lambda)\overline{\mu(z,\lambda)},
\end{equation}
for $\lambda$ not an exceptional point (i.e. $k(\lambda) \in \SE \setminus (\E_E \cup \R^2)$), where
\begin{align} \label{defr2}
r(z,\lambda) &= r(\lambda)\exp \bigg[\frac i 2 \sqrt{E}\left(1+(\mathrm{sgn}\,E)\frac{1}{\lambda \bar \lambda} \right) \left( (\mathrm{sgn}\,E)z\bar \lambda + \lambda \bar z\right)\bigg],\\  \label{defr}
r(\lambda) &= \frac{\pi}{\bar \lambda}\mathrm{sgn}(\lambda \bar \lambda -1) b(\lambda,E).
\end{align}

We recall that if $v \in W^{m,1}(\R^2)$ with $\supp v \subset D$, then $\|\hat{v} \|_m < +\infty$, where
\begin{gather}
\hat v(p) = (2 \pi)^{-2} \int_{\R^2} e^{ipx} v(x) dx, \qquad p \in \C^2, \\
\|u\|_m = \sup_{p \in \R^2} | (1+|p|^2)^{m/2} u(p)|,
\end{gather}
for a test function $u$.

The following lemma is a variation of a result in \cite{N4}:
\begin{lem} \label{lemb}
Let the conditions \eqref{cv1}, \eqref{cv2} hold for a potentials $v$ and let $E \in \R \setminus \{0\}$. Then there exists an $R = R(m,\|\hat{v} \|_m) > 1$, such  that
\begin{equation}
|b(\lambda,E)| \leq 2 \|\hat v\|_m \left( 1+|E|\left(|\lambda|+\mathrm{sgn}(E)/|\lambda| \right)^2 \right)^{-m/2},
\end{equation}
for $|\lambda| > \frac{2R}{|E|^{1/2}}$ and $|\lambda|<\frac{|E|^{1/2}}{2R}$.
\end{lem}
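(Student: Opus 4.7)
The plan is to follow the scheme of the corresponding result in \cite{N4}, decomposing $b = b_0 + (b - b_0)$, where $b_0$ is the Born approximation obtained by substituting $\mu \equiv 1$ in the integral defining $b$. First I would show that $|b_0|$ already satisfies the stated bound with constant $1$; then I would dominate $|b - b_0|$ by the same quantity provided $|k(\lambda)|$ is sufficiently large, yielding the overall factor of $2$.

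For the first step, I would reduce the phase in the definition of $b$ to the form $ip(\lambda, E)\cdot x$ for a real vector $p(\lambda, E)\in\R^2$. Writing $z = x_1 + ix_2$ and $\lambda = \lambda_1 + i\lambda_2$, and treating the cases $E > 0$ (so $\sqrt E$ is real and the phase reduces to $i\sqrt{E}(1 + 1/|\lambda|^2)\Ree(z\bar\lambda)$) and $E < 0$ (so $\sqrt E = i\sqrt{|E|}$ and the phase reduces to $i\sqrt{|E|}(1 - 1/|\lambda|^2)\Imm(z\bar\lambda)$) separately, a direct computation produces
\[
|p(\lambda, E)|^2 = |E|\bigl(|\lambda| + \mathrm{sgn}(E)/|\lambda|\bigr)^2.
\]
Thus $|b_0(\lambda, E)| = |\hat v(p(\lambda, E))|$, and the desired bound on $|b_0|$ is immediate from the definition of $\|\hat v\|_m$.

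For the remainder, I would use $\mu - 1 = g(\cdot, k(\lambda)) * (v\mu)$ coming from \eqref{inteq2} together with the Fourier representation \eqref{greenfad} of $g$; after interchanging integrations,
\[
(b - b_0)(\lambda, E) = -\int_{\R^2} \frac{\hat v(p(\lambda, E) + \xi)\,\widehat{v\mu(\cdot, \lambda)}(\xi)}{\xi^2 + 2k(\lambda)\xi}\, d\xi
\]
up to sign conventions. A small-norm Neumann-series analysis of \eqref{inteq2}, valid once $|k(\lambda)|$ exceeds some $R_0 = R_0(m, \|\hat v\|_m)$, gives $\|\mu(\cdot, \lambda)\|_{L^\infty(\R^2)} \leq 2$ and hence $\|\widehat{v\mu(\cdot,\lambda)}\|_m \leq C\|\hat v\|_m$. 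Using the Faddeev parametrization one checks that $|k(\lambda)|^2 = \frac{|E|}{2}(|\lambda|^2 + 1/|\lambda|^2)$, so the hypothesis $|\lambda| > 2R/|E|^{1/2}$ or $|\lambda| < |E|^{1/2}/(2R)$ gives $|k(\lambda)| \geq \sqrt{2}R$; taking $R \geq R_0$ is therefore admissible.

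The main obstacle is the subsequent $\xi$-integral estimate, since the denominator $\xi^2 + 2k(\lambda)\xi$ vanishes on a circle through the origin. I would split the integration region into a tubular neighbourhood of this singular circle and its complement, combining the codimension-$1$ integrability of $1/(\xi^2 + 2k\xi)$ near its zeros with the polynomial decay of $\hat v$ and $\widehat{v\mu}$; a rescaling $\xi \mapsto \xi/|k(\lambda)|$ produces a gain of a factor $1/|k(\lambda)|$, leading to
\[
|(b - b_0)(\lambda, E)| \leq \frac{C(m, \|\hat v\|_m)}{|k(\lambda)|}\bigl(1 + |p(\lambda, E)|^2\bigr)^{-m/2}.
\]
After possibly enlarging $R$ so that $C(m, \|\hat v\|_m)/(\sqrt 2 R) \leq 1$, summing with the bound on $b_0$ yields the claim. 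This $\xi$-integral estimate is the technical heart of the proof and closely mirrors the analogous computation in \cite[Proposition 1.1]{N4}.
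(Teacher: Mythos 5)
Your proof takes a genuinely different, and much more self-contained, route than the paper's. The paper's own proof is essentially a two-line reduction: it introduces $H(k,p)=(2\pi)^{-2}\int e^{i(p-k)x}v(x)\psi(x,k)\,dx$, observes that $b(\lambda,E)=H(k(\lambda),k(\lambda)+\overline{k(\lambda)})$, and then invokes \cite[Proposition 1.1, Corollary 1.1]{N4} wholesale to obtain $|H(k,p)|\le 2\|\hat v\|_m(1+p^2)^{-m/2}$ for $|k|>R$; the identification $|k(\lambda)+\overline{k(\lambda)}|^2=|E|\left(|\lambda|+\mathrm{sgn}(E)/|\lambda|\right)^2$ is left implicit. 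Your Born decomposition $b=b_0+(b-b_0)$ makes the elementary content explicit, and those computations check out: the reduction of the phase to $ip(\lambda,E)\cdot x$ in both signs of $E$, the identity $|p(\lambda,E)|^2=|E|\left(|\lambda|+\mathrm{sgn}(E)/|\lambda|\right)^2$, and $|k(\lambda)|^2=\tfrac{|E|}{2}\left(|\lambda|^2+1/|\lambda|^2\right)$, hence $|k(\lambda)|\ge\sqrt{2}R$ on the stated $\lambda$-region, are all correct. What each approach buys: the paper's is shorter and transfers the entire analytic burden to \cite{N4}; yours isolates exactly which part is elementary (the bound on $b_0$) and which part requires the hard estimate (the bound on $b-b_0$).

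One step of your remainder estimate does not follow as written: from $\|\mu(\cdot,\lambda)\|_{L^\infty(\R^2)}\le 2$ you cannot conclude $\|\widehat{v\mu(\cdot,\lambda)}\|_m\le C\|\hat v\|_m$. The norm $\|\cdot\|_m$ measures decay of order $m$ of the Fourier transform, which amounts to controlling $m$ derivatives of $v\mu$ in $L^1$; a pointwise bound on $\mu$ only gives $\|\widehat{v\mu(\cdot,\lambda)}\|_{L^\infty}\le c\,\|v\|_{L^1}$ with no decay in $\xi$, and Lemma \ref{lem21} supplies only first derivatives of $\mu$, not $m$ of them. This is precisely why Novikov runs the Neumann series for the integral equation directly in the $\|\cdot\|_m$-weighted space (iterating the equation satisfied by $H(k,\cdot)$ itself and using that convolution against the singular kernel maps that weighted space to itself with norm $O(1/|k|)$), rather than first bounding $\mu$ in $L^\infty$. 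If you replace your $L^\infty$ contraction step by that weighted-norm iteration, the remainder of your sketch --- the splitting of the $\xi$-integral near the zero circle of $\xi^2+2k\xi$ and the resulting $1/|k(\lambda)|$ gain --- does go through as in \cite[Proposition 1.1]{N4}, and the lemma follows with the stated constant $2$ after enlarging $R$.
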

\begin{proof}
We consider the function $H(k,p)$ defined as
\begin{equation}
H(k,p) = \frac{1}{(2 \pi)^2}\int_{\R^2}e^{i(p-k)x}v(x)\psi(x,k) dx,
\end{equation}
for
\begin{equation}
k = k(\lambda)= \left( \frac{\sqrt{E}}{2}(\lambda+\lambda^{-1}), \frac{i\sqrt{E}}{2}(\lambda^{-1}-\lambda)\right),
\end{equation}
$\lambda \in \C\setminus \{ 0 \}$, $\Imm\, k(\lambda) \neq 0$, $p \in \R^2$ and $\psi(x,k)$ as defined at the beginning of this section. Since $\E_E = \emptyset$ (see Remark \ref{reme}), the function $H(k(\lambda), k(\lambda) + \overline{k(\lambda)})=b(\lambda,E)$ is defined for every $\lambda \in \C \setminus \{0\}$. Then, by \cite[Proposition 1.1, Corollary 1.1]{N4} (see also Remark \ref{beh}) we have
\begin{equation} \label{esttt}
|H(k,p)| \leq 2 \|\hat v\|_m ( 1+p^2)^{-m/2}, \qquad \text{for } |k| > R(m,\|\hat{v} \|_m),
\end{equation}
where $|k|=(|\Ree k|^2 + |\Imm k|^2)^{1/2}$. This finishes the proof of Lemma \ref{lemb}.
\end{proof}

At several points in the paper we will use \cite[Lemma 2.1]{N4}, which we restate in an adapted form.
\begin{lem} \label{lem21}
Let the conditions \eqref{cv1}, \eqref{cv2} hold for a potentials $v$. Let $\mu(x,k)$ be the associated Faddeev functions. Then, for any $0<\sigma<1$, we have
\begin{align}
|\mu(x,k)-1|+\left|\frac{\partial \mu(x,k)}{\partial x_1}\right|+\left|\frac{\partial \mu(x,k)}{\partial x_2}\right| \leq |k|^{-\sigma}c(m,\sigma) \|\hat v\|_m, 
\end{align} 
for $k \in \C^2$ such that $k^2 < 0$ and $|k| \geq R$, where $R$ is defined in Lemma \ref{lemb}.
\end{lem}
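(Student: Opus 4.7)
The plan is to treat \eqref{inteq2} as a fixed-point problem $\mu = 1 + T_k \mu$, where $(T_k f)(x) = \int_{\R^2} g(x - y, k)\, v(y)\, f(y)\, dy$. Two ingredients will deliver the statement: first, a smallness estimate
\[
\|T_k\|_{L^\infty(\R^2) \to L^\infty(\R^2)} \leq c(m,\sigma)\,|k|^{-\sigma} \|\hat v\|_m,
\]
which allows inversion of $I - T_k$ by Neumann series for $|k| \geq R$ and gives $\|\mu\|_\infty \leq 2$; and second, the identity $\mu - 1 = T_k(1) + T_k(\mu - 1)$, which converts this bound directly into the required $|k|^{-\sigma}$ decay of $\|\mu - 1\|_\infty$. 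The estimates for $\partial_{x_j}\mu$ are obtained by differentiating \eqref{inteq2} once and replacing $g$ by $\partial_{x_j}g$ throughout the same argument; the symbol of the differentiated kernel is $-i\xi_j/(\xi^2 + 2k\xi)$, so only the numerator of the singular integral below is affected.

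Both bounds reduce, via Fourier analysis of \eqref{greenfad}, to the singular integral
\[
I_\alpha(k) := \int_{\R^2} \frac{|\xi|^\alpha\, d\xi}{(1+|\xi|^2)^{m/2}\,|\xi^2 + 2 k \xi|}, \qquad \alpha \in \{0,1\},
\]
which must be shown to satisfy $I_\alpha(k) \leq c(m,\sigma)|k|^{-\sigma}$ for every $\sigma < 1$. The weight $(1+|\xi|^2)^{-m/2}$ is furnished by $|\hat v(\xi)| \leq \|\hat v\|_m (1+|\xi|^2)^{-m/2}$ for the bound on $T_k(1)$, and by the analogous decay of $\widehat{vf}$ for $f \in L^\infty$ (available because $\supp v \subset D$ is fixed) for the operator norm. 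Writing $k = a + ib$ with $a, b \in \R^2$ and noting that $k^2<0$ forces $a \cdot b = 0$ and $|b|>|a|$, one identifies the real zeros of the Faddeev symbol $\xi^2 + 2k\xi$ — the origin together with possibly one further point of norm $O(|k|)$ — and shows that the symbol is bounded below by a constant multiple of $|k|$ times the distance to this zero set in a neighborhood of each zero, while growing like $|\xi|^2$ at infinity. Splitting $I_\alpha(k)$ into small discs about the zeros, a middle region where the symbol is uniformly $\gtrsim |k|^2$, and an infinite tail controlled by $m>2$, and optimizing the disc radii, produces the claimed bound.

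The main obstacle is precisely this quantitative analysis near the zero at $\xi = 0$. There the weight $(1+|\xi|^2)^{-m/2}$ is of order one, so all the decay in $|k|$ must be extracted from the factor $|k|^{-1}$ produced by the linearization $\xi^2 + 2k\xi \approx 2k\xi$; the radial integral over a disc of radius $\rho$ around $0$ contributes a logarithm in $\rho$, and balancing $\rho$ against $|k|$ produces a genuine power decay $|k|^{-\sigma}$ only at the cost of an arbitrarily small loss $1 - \sigma > 0$. This is exactly why the statement asserts $\sigma < 1$ rather than $\sigma = 1$, and it is the step that genuinely requires the borderline-logarithmic singular-integral analysis rather than a purely algebraic Fourier bound.
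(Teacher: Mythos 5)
The paper does not actually prove this lemma: it is imported verbatim from \cite[Lemma 2.1]{N4}, with Remark \ref{beh} asserting that the proof there carries over from positive to negative energy. So your proposal has to stand on its own. Its overall architecture --- perturbation of the integral equation \eqref{inteq2}, Neumann series for $|k|\ge R$, reduction to the singular integral $I_\alpha(k)$, and the borderline-logarithmic analysis near the real zeros of $\xi^2+2k\xi$ that forces $\sigma<1$ --- is the standard one, and the last part is essentially right, including the identification of the zero set $\{0,-2\,\Ree k\}$ and the source of the loss $1-\sigma$.

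There is, however, a genuine gap in the step you lean on most: the bound $\|T_k\|_{L^\infty\to L^\infty}\le c(m,\sigma)|k|^{-\sigma}\|\hat v\|_m$. You propose to obtain it from ``the analogous decay of $\widehat{vf}$ for $f\in L^\infty$, available because $\supp v\subset D$ is fixed.'' Compact support of $vf$ gives no decay of $\widehat{vf}$ whatsoever: decay of a Fourier transform comes from smoothness, and for a general $f\in L^\infty$ the product $vf$ is merely a compactly supported bounded function, so $\widehat{vf}$ is bounded but $\int|\hat g(\xi)|\,|\widehat{vf}(\xi)|\,d\xi$ is not even finite (the integrand behaves like $|\xi|^{-2}$ at infinity in $\R^2$), let alone $O(|k|^{-\sigma})$. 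The same objection hits the differentiated equation harder still, since there the symbol decays only like $|\xi|^{-1}$. Your Fourier-side reduction to $I_\alpha(k)$ is legitimate only for $T_k(1)=g*v$, where the weight $(1+|\xi|^2)^{-m/2}$ is supplied by $\hat v$ itself; it cannot control $T_k(\mu-1)$ or the Neumann series. To close the argument you need one of the following genuinely additional ingredients: (i) pointwise physical-space bounds on the Faddeev Green's function, e.g. $|g(x,k)|\le c\,(|k||x|)^{-1/2}$ for $|k||x|\ge1$ and $|g(x,k)|\le c(1+|\log(|k||x|)|)$ otherwise, which give $\sup_x\int|g(x-y,k)||v(y)|\,dy\to0$ and hence invertibility of $I-T_k$ on $L^\infty$; or (ii) a reformulation of the fixed-point problem for $\widehat{v\mu}$ in the weighted norm $\|\cdot\|_m$, where the operator becomes $\hat u\mapsto\hat v*(\hat g\,\hat u)$ and Peetre's inequality transfers the weight, reducing the operator norm to the same integral $I_0(k)$ --- this is in substance what \cite{N4} does via the function $H(k,p)$ appearing in the proof of Lemma \ref{lemb}. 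Either way, the missing estimate is real and cannot be replaced by the compact-support remark.
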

Throughout all the paper $c(\alpha, \beta, \ldots)$ is a positive constant depending on parameters $\alpha, \beta, \ldots$

\begin{rem} \label{beh}
Even if \cite[Proposition 1.1, Corollary 1.1, Lemma 2.1]{N4} were proved for $E >0$, they are still valid in the negative energy case (and zero energy case).
\end{rem}

We also restate \cite[Lemma 2.6]{BBR}, which will be useful in section \ref{secrv}.
\begin{lem}[\cite{BBR}] \label{lemtech}
Let $q_1\in L^{s_1}(\C) \cap L^{s_2}(\C)$, $1 < s_1 <2 < s_2 < \infty$ and $q_2 \in L^s( \C)$, $1 < s <2$. Assume $u$ is a function in $L^{\tilde s}(\C)$, with $1/\tilde s = 1/s - 1/2$, which satisfies
\begin{equation}
\frac{\partial u (\lambda)}{\partial \bar \lambda} = q_1(\lambda) \bar u(\lambda) + q_2(\lambda), \qquad \lambda \in \C.
\end{equation}
Then there exists $c=c(s,s_1,s_2) >0$ such that
\begin{equation}
\|u\|_{L^{\tilde s}} \leq c \|q_2\|_{L^s} \exp(c (\|q_1\|_{L^{s_1}}+\|q_1\|_{L^{s_2}})).
\end{equation}
\end{lem}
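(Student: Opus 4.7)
The plan is to reduce the real-linear $\bar\partial$-equation to a pure $\bar\partial$-equation via a Vekua-type similarity principle, and then close the estimate using the $L^s \to L^{\tilde s}$ and $L^{s_1}\cap L^{s_2}\to L^\infty$ mapping properties of the solid Cauchy transform.

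First, recall the solid Cauchy transform $Tf(\lambda)=-\frac{1}{\pi}\int_{\C} f(\zeta)/(\zeta-\lambda)\,d\Ree\zeta\,d\Imm\zeta$, which inverts $\bar\partial$ distributionally. The Hardy--Littlewood--Sobolev inequality gives $T\colon L^s(\C)\to L^{\tilde s}(\C)$ with $1/\tilde s=1/s-1/2$ whenever $1<s<2$, while splitting the kernel $|\zeta-\lambda|^{-1}$ into near and far parts and applying Hölder yields $T\colon L^{s_1}(\C)\cap L^{s_2}(\C)\to L^\infty(\C)$ when $1<s_1<2<s_2<\infty$. Introduce the auxiliary function $\eta(\lambda)=\overline{u(\lambda)}/u(\lambda)$ on $\{u\neq 0\}$ and $\eta=0$ elsewhere, so that $|\eta|\le 1$ on $\C$, and set $\omega=T(q_1\eta)$. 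Since $|q_1\eta|\le|q_1|$ pointwise,
\begin{equation*}
\|\omega\|_{L^\infty(\C)}\le c(s_1,s_2)\bigl(\|q_1\|_{L^{s_1}}+\|q_1\|_{L^{s_2}}\bigr),\qquad \bar\partial\omega=q_1\eta.
\end{equation*}

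Next, on $\{u\neq 0\}$ a direct computation using $u\,\bar\partial\omega=u\cdot q_1\bar u/u=q_1\bar u$ gives
\begin{equation*}
\bar\partial(ue^{-\omega})=(\bar\partial u-u\,\bar\partial\omega)e^{-\omega}=(q_1\bar u+q_2-q_1\bar u)e^{-\omega}=q_2 e^{-\omega},
\end{equation*}
and this identity extends to the whole plane (see below). Since $ue^{-\omega}\in L^{\tilde s}(\C)$ with $\tilde s<\infty$, Liouville forces the entire function arising from inversion of $\bar\partial$ to vanish, so applying $T$ to both sides yields $ue^{-\omega}=T(q_2 e^{-\omega})$. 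Taking $L^{\tilde s}$-norms and combining the two mapping properties of $T$ gives
\begin{equation*}
\|u\|_{L^{\tilde s}}\le e^{\|\omega\|_\infty}\|T(q_2 e^{-\omega})\|_{L^{\tilde s}}\le c\,e^{2\|\omega\|_\infty}\|q_2\|_{L^s},
\end{equation*}
which together with the bound on $\|\omega\|_\infty$ is precisely the claimed inequality.

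The main technical obstacle is the rigorous justification of the identity $\bar\partial(ue^{-\omega})=q_2 e^{-\omega}$ across the zero set $\{u=0\}$, where $\eta$ was set by convention rather than as the genuine ratio $\bar u/u$. I would handle this by regularization: replace $\eta$ by $\eta_\varepsilon(\lambda)=\bar u(\lambda)u(\lambda)/(|u(\lambda)|^2+\varepsilon^2)$ so that $|\eta_\varepsilon|\le 1$ uniformly and $\eta_\varepsilon\to\eta$ a.e., verify the perturbed identity $\bar\partial(ue^{-\omega_\varepsilon})=q_2 e^{-\omega_\varepsilon}+R_\varepsilon$ with $\|R_\varepsilon\|_{L^s}\to 0$ as $\varepsilon\to 0^+$ (using dominated convergence together with $q_1\in L^{s_1}\cap L^{s_2}$, $q_2\in L^s$ and $u\in L^{\tilde s}$), and then pass to the limit after applying $T$. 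As an alternative route, the classical Vekua unique-continuation property for real-linear elliptic equations guarantees that nontrivial solutions of $\bar\partial u=q_1\bar u+q_2$ vanish on a set of Lebesgue measure zero, so $\eta$ is in fact well defined a.e.\ and the computation is directly valid in the distributional sense.
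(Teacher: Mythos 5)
Your argument is correct and is essentially the proof of \cite[Lemma 2.6]{BBR} itself; the paper offers no proof of this statement, simply quoting it from that reference, and your similarity-principle reduction (setting $\omega = T(q_1\bar u/u)$, passing to $ue^{-\omega}$, invoking Liouville, and using the $L^s\to L^{\tilde s}$ and $L^{s_1}\cap L^{s_2}\to L^\infty$ bounds for the Cauchy transform) is exactly the standard route. One small simplification: the worry about the zero set is largely unnecessary, since the pointwise identity $u\,\bar\partial\omega = q_1 u\eta = q_1\bar u$ holds a.e.\ on all of $\C$ (both sides vanish where $u=0$), so the only point needing care is the distributional product rule for $ue^{-\omega}$, which follows from the H\"older continuity of $\omega$ and the local Sobolev regularity of $u$.
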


We will make also use of the well-known H\"older's inequality, which we recall in a special case: for $f \in L^p(\C)$, $g \in L^q(\C)$ such that $1 \leq p,q\leq \infty$, $1\leq r < \infty$, $1/p+1/q = 1/r$, we have
\begin{equation} \label{holder}
 \|fg\|_{L^r(\C)}\leq \|f\|_{L^p(\C)}\|g\|_{L^q(\C)}.
\end{equation}

\section{From $\Phi(E)$ to $r(\lambda)$} \label{secphir}

\begin{lem} \label{lemestr}
Let the conditions \eqref{cv1}, \eqref{cv2} hold and take $0<a_1< \min\left(1, \frac{|E|^{1/2}}{2R}\right)$, $a_2 > \max\left(1,\frac{2R}{|E|^{1/2}}\right)$,
for $E\in \R \setminus \{0\}$ and $R$ as defined in Lemma \ref{lemb}. Then for $p \geq 1$ we have
\begin{align} \label{estlem21}
\left\||\lambda|^j r(\lambda)\right\|_{L^p(|\lambda|<a_1)} &\leq c(p,m) \|\hat v\|_m |E|^{-m/2} a_1^{m-1+j+2/p}, \\ \label{estlem22}
\left\||\lambda|^j r(\lambda)\right\|_{L^p(|\lambda|>a_2)} &\leq c(p,m) \|\hat v\|_m |E|^{-m/2} a_2^{-m-1+j+2/p},
\end{align}
where $j=1,0,-1$ and $r$ was defined in \eqref{defr}.
\end{lem}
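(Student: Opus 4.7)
The plan is to substitute the pointwise bound on $b$ from Lemma~\ref{lemb} into the explicit formula for $r$ and integrate in polar coordinates. From \eqref{defr}, $\bigl||\lambda|^j r(\lambda)\bigr| = \pi|\lambda|^{j-1}|b(\lambda,E)|$, and on both regions $\{|\lambda|<a_1\}$ and $\{|\lambda|>a_2\}$ the hypotheses of Lemma~\ref{lemb} are satisfied by the choice of $a_1,a_2$. Using the elementary inequality $(1+t)^{-m/2}\leq t^{-m/2}$ for $t>0$ I would obtain
\[
|b(\lambda,E)| \leq 2\|\hat v\|_m\bigl(1+|E|(|\lambda|+\mathrm{sgn}(E)/|\lambda|)^2\bigr)^{-m/2} \leq 2\|\hat v\|_m\,|E|^{-m/2}\bigl||\lambda|+\mathrm{sgn}(E)/|\lambda|\bigr|^{-m}.
\]

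Next I would bound $\bigl||\lambda|+\mathrm{sgn}(E)/|\lambda|\bigr|$ from below: by a multiple of $1/|\lambda|$ on $\{|\lambda|<a_1\}$ and by a multiple of $|\lambda|$ on $\{|\lambda|>a_2\}$. For $E>0$ both bounds are immediate. For $E<0$ one uses $\bigl||\lambda|-1/|\lambda|\bigr| = \bigl|1-|\lambda|^2\bigr|/|\lambda|$ together with the separations $a_1<1$ and $a_2>1$ to stay away from the vanishing at $|\lambda|=1$, with the resulting comparison constant absorbed into $c(p,m)$. This produces
\[
\bigl||\lambda|^j r(\lambda)\bigr| \leq C\|\hat v\|_m\,|E|^{-m/2}\,|\lambda|^{j+m-1} \quad \text{on }\{|\lambda|<a_1\},
\]
and the symmetric $\bigl||\lambda|^j r(\lambda)\bigr| \leq C\|\hat v\|_m\,|E|^{-m/2}\,|\lambda|^{j-m-1}$ on $\{|\lambda|>a_2\}$.

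The $L^p$ norms are then obtained by passing to polar coordinates: for \eqref{estlem21},
\[
\int_0^{a_1} s^{(j+m-1)p+1}\,ds = \frac{a_1^{(j+m-1)p+2}}{(j+m-1)p+2}
\]
(valid since $j+m-1>0$), while for \eqref{estlem22} the tail integral $\int_{a_2}^\infty s^{(j-m-1)p+1}\,ds$ converges to $a_2^{(j-m-1)p+2}/((m+1-j)p-2)$ because $(m+1-j)p \geq mp > 2$ under the standing hypothesis $m>2$ (for all $j\leq 1$, $p\geq 1$). Extracting the $p$-th root gives exactly the exponents $m-1+j+2/p$ and $-m-1+j+2/p$ on $a_1$ and $a_2$, together with the prefactor $\|\hat v\|_m|E|^{-m/2}$. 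The main technical point is the negative-energy case, where the factor $(|\lambda|-1/|\lambda|)^2 = (1-|\lambda|^2)^2/|\lambda|^2$ degenerates at $|\lambda|=1$; it is precisely the conditions $a_1<1$ and $a_2>1$ built into the hypotheses that keep the comparison constant finite and allow the polynomial decay in $a_1$, $1/a_2$ to survive.
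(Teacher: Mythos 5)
Your proof is correct and takes essentially the same route as the paper's, which likewise plugs the pointwise bound of Lemma \ref{lemb} into $|r(\lambda)|=\pi|b(\lambda,E)|/|\lambda|$ and then computes $\int_{t<a_1}t^{1+(m-1+j)p}\,dt$ in polar coordinates, exactly as you do. The one caveat concerns the step you yourself single out: for $E<0$ the comparison constant in the lower bound $\bigl||\lambda|-1/|\lambda|\bigr|\geq c/|\lambda|$ on $\{|\lambda|<a_1\}$ degenerates like $(1-a_1^2)^{-1}$ as $a_1\uparrow 1$ (and similarly at $a_2\downarrow 1$), so it is not literally absorbed into a constant $c(p,m)$ independent of $a_1,a_2$ --- but the paper's own proof makes the identical silent assumption, so this is a shared imprecision rather than a gap in your argument relative to the paper.
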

\begin{proof}
It is a corollary of Lemma \ref{lemb}. Indeed $|r(\lambda)| = \pi |b(\lambda,E)|/|\lambda|$ and
\begin{align*}
\||\lambda|^j r \|_{L^p(|\lambda| < a_1)}^p &\leq c \left(\frac{\|\hat v\|_m}{|E|^{m/2}}\right)^p \int_{t < a_1} t^{1+(m-1+j)p} dt\\
&= c(p,m) \left(\frac{\|\hat v\|_m}{|E|^{m/2}}\right)^p a_1^{(m-1+j)p+2},
\end{align*}
which gives \eqref{estlem21}. The proof of \eqref{estlem22} is analogous.
\end{proof}

\begin{lem} \label{lemdifh}
Let $D \subset \{ x \in \R^2 \, : \, |x| \leq l\}$, $E < 0$, $v_1, v_2$ be two potentials satisfying \eqref{direig}, \eqref{cv1}, \eqref{cv2}, $\Phi_1(E), \Phi_2(E)$ the corresponding Dirichlet-to-Neumann operator and $b_1, b_2$ the corresponding generalised scattering amplitude. Let $\|v_j\|_{m,1} \leq N$, $j=1,2$. Then we have
\begin{equation} \label{estdifh}
|b_2(\lambda) - b_1(\lambda)|\leq c(D,N)e^{(l+1)\sqrt{|E|}(|\lambda|+1/|\lambda|)}\|\Phi_2(E) - \Phi_1(E)\|_*, \; \lambda \in \C \setminus \{0\}.
\end{equation}
\end{lem}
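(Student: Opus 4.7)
The plan is to combine the classical Alessandrini-type identity for the Faddeev scattering amplitude (as in \cite{N5}) with the uniform bounds on the Faddeev functions provided by Lemma \ref{lem21} and a direct computation of $|\mathrm{Im}\, k(\lambda)|$.

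The first step is to express $b_2(\lambda)-b_1(\lambda)$ as a boundary pairing. Rewriting the definition of $b$ in terms of $\psi = e^{ikx}\mu$, the phase $\exp[\tfrac{i}{2}\sqrt{E}(1+\mathrm{sgn}(E)/(\lambda\bar\lambda))(\mathrm{sgn}(E)z\bar\lambda+\lambda\bar z)]$ factors as a product of two Faddeev exponentials $e^{i k_\ast(\lambda) x}\,e^{i k(\lambda) x}$ for a naturally paired point $k_\ast(\lambda)\in\SE$. Using the identity $\int_D (v_2-v_1)\psi_1\psi_2\,dx = \langle \psi_1, (\Phi_2(E)-\Phi_1(E))\psi_2\rangle_{\partial D}$, which follows from integration by parts against the Schr\"odinger equations for $\psi_1,\psi_2$, one obtains a representation of the form
\begin{equation} \nonumber
b_2(\lambda) - b_1(\lambda) = (2\pi)^{-2} \int_{\partial D} \psi_1(x, k_\ast(\lambda))\,(\Phi_2(E)-\Phi_1(E))\psi_2(x, k(\lambda))\, d\sigma(x).
\end{equation}

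The second step is to estimate the two boundary factors. By the trace theorem and the factorization $\psi_j = e^{ikx}\mu_j$,
\begin{equation} \nonumber
\|\psi_j(\cdot,k)\|_{H^{1/2}(\partial D)} \leq c(D)\|\psi_j(\cdot,k)\|_{H^1(D)} \leq c(D,N)\,(1+|k|)\,e^{l|\mathrm{Im}\, k|},
\end{equation}
where the uniform bounds on $|\mu_j|$ and $|\nabla \mu_j|$ follow from Lemma \ref{lem21} for $|k|\geq R$, and from continuity together with the absence of exceptional points (Remark \ref{reme}, using \eqref{cv2}) for $|k|<R$. Next, from $k(\lambda)=(\tfrac{\sqrt{E}}{2}(\lambda+\lambda^{-1}), \tfrac{i\sqrt{E}}{2}(\lambda^{-1}-\lambda))$ with $\sqrt{E}=i\sqrt{|E|}$ for $E<0$, a direct computation gives
\begin{equation} \nonumber
|\mathrm{Im}\, k(\lambda)|^2 = \tfrac{|E|}{4}(|\lambda|+|\lambda|^{-1})^2, \qquad |k(\lambda)|^2 = \tfrac{|E|}{2}(|\lambda|^2+|\lambda|^{-2}),
\end{equation}
and the same bounds hold for $k_\ast(\lambda)$ (which lies on the same fiber of $\SE$).

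The final step is to combine these ingredients: using $\|\Phi_2-\Phi_1\|_\ast$ as a bilinear form on $H^{1/2}(\partial D)\times H^{1/2}(\partial D)$,
\begin{equation} \nonumber
|b_2(\lambda)-b_1(\lambda)| \leq c(D,N)(1+|k|)^2\, e^{2l|\mathrm{Im}\, k|}\, \|\Phi_2(E)-\Phi_1(E)\|_\ast,
\end{equation}
and absorbing the polynomial prefactor $(1+|k|)^2$ into the exponential via $1+|k|\leq e^{|k|}$ together with $|k|\leq \sqrt{|E|}(|\lambda|+|\lambda|^{-1})$ and $2|\mathrm{Im}\, k|\leq \sqrt{|E|}(|\lambda|+|\lambda|^{-1})$, one arrives at an exponent bounded by $(l+1)\sqrt{|E|}(|\lambda|+|\lambda|^{-1})$, yielding \eqref{estdifh}. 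The main obstacle is the first step: one must check that the particular phase in the definition of $b(\lambda,E)$ factors exactly as a product of two Faddeev exponentials at the right points of $\SE$, so that the Alessandrini-type identity gives $b_2-b_1$ rather than a related but different quantity; once this book-keeping is done, the subsequent estimates are routine applications of Lemma \ref{lem21} and elementary calculus on $\SE$.
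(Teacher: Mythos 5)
Your proposal follows essentially the same route as the paper's proof: the Alessandrini--Novikov identity expressing $b_2-b_1$ as a boundary pairing of Faddeev functions against $\Phi_2(E)-\Phi_1(E)$, followed by a trace-theorem estimate of $\|\psi_j(\cdot,k(\lambda))\|_{H^{1/2}(\partial D)}$ using the uniform bounds on $\mu_j$ from Lemma \ref{lem21} and the computation $|\Imm\, k(\lambda)|=\tfrac{\sqrt{|E|}}{2}(|\lambda|+1/|\lambda|)$ (the paper uses $W^{1,\tilde p}(D)$, $\tilde p>2$, in place of your $H^1(D)$, which is immaterial).

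Two caveats. First, the identity in your opening step is not obtained from integration by parts plus phase bookkeeping alone. Integration by parts yields the Alessandrini pairing $\int_D(v_2-v_1)\psi_1\psi_2\,dx=\int_{\partial D}\psi_1(\Phi_2(E)-\Phi_1(E))\psi_2\,dx$, but $b_j(\lambda)$ is by definition $(2\pi)^{-2}\int e^{i\overline{k(\lambda)}x}v_j\psi_j(x,k(\lambda))\,dx$, i.e.\ each potential is paired with its \emph{own} eigenfunction against the \emph{free} exponential; identifying $b_2-b_1$ with $(2\pi)^{-2}\int_D(v_2-v_1)\psi_1(x,\overline{k(\lambda)})\psi_2(x,k(\lambda))\,dx$ requires the integral equations \eqref{inteq} and a symmetry of the Faddeev Green's function, and is precisely the content of \cite[Theorem 1]{N5} --- which the paper also simply cites rather than reproves, so you should do the same rather than present it as routine bookkeeping. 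Second, as literally written your absorption of the prefactor, $(1+|k|)^2\le e^{2|k|}$ combined with $|k|\le\sqrt{|E|}(|\lambda|+1/|\lambda|)$, produces the exponent $(l+2)\sqrt{|E|}(|\lambda|+1/|\lambda|)$ rather than the claimed $(l+1)$; this is repaired by the sharper bounds $(1+t)^2\le 4e^{t}$ and $|k|\le\tfrac{1}{\sqrt{2}}\sqrt{|E|}\left(|\lambda|+1/|\lambda|\right)$.
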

\begin{proof}
We have the following identity:
\begin{equation} \label{aless}
b_2(\lambda) - b_1(\lambda) = \left(\frac{1}{2\pi}\right)^2\int_{\partial D}\psi_1(x,\overline{k(\lambda)})(\Phi_2(E) - \Phi_1(E))\psi_2(x,k(\lambda)) dx,
\end{equation}
where $\psi_i(x,k)$ are the Faddeev functions associated to the potential $v_i$, $i=1,2$. This identity is a particular case of the one in \cite[Theorem 1]{N5}: we refer to that paper for a proof.

From this identity we obtain:
\begin{align} \label{estlem1}
|b_2(\lambda) - b_1(\lambda)| \leq \frac{1}{(2\pi)^2} \|\psi_1(\cdot,k)\|_{H^{1/2}(\partial D)}\|\Phi_2(E) - \Phi_1(E)\|_* \|\psi_2(\cdot,k)\|_{H^{1/2}(\partial D)}.
\end{align}
Now, for $\tilde p > 2$, using the trace theorem and Lemma \ref{lem21} we get
\begin{align*}
&\|\psi_j(\cdot,k(\lambda))\|_{H^{1/2}(\partial D)} \leq c \|\psi_j(\cdot,k(\lambda))\|_{W^{1,\tilde p}(D)}\\
&\quad \leq c\frac{\sqrt{|E|}}{2}l (|\lambda|+1/|\lambda|) e^{\frac{\sqrt{|E|}}{2}l (|\lambda|+1/|\lambda|)} \|\mu_j(\cdot,k(\lambda))\|_{W^{1,\tilde p}(D)}\\
&\quad \leq c\, e^{\frac{\sqrt{|E|}}{2}(l+1) (|\lambda|+1/|\lambda|)} \|\mu_j(\cdot,k(\lambda))\|_{W^{1,\tilde p}(D)} \leq c(D,N,m) e^{\frac{\sqrt{|E|}}{2}(l+1) (|\lambda|+1/|\lambda|)},
\end{align*}
for $j=1,2.$ This, combined with \eqref{estlem1}, gives \eqref{estdifh}.
\end{proof}

Now we turn to the main result of the section.
\begin{prop} \label{proprdir}
Let $E < 0$ be such that $|E|\geq  E_1 = \max ((2R)^2, E_0)$, where $R$ is defined in Lemma \ref{lemb} and $E_0$ in Remark \ref{reme}, let $v_1, v_2$ be two potentials satisfying \eqref{direig}, \eqref{cv1}, \eqref{cv2}, $\Phi_1(E), \Phi_2(E)$ the corresponding Dirichlet-to-Neumann operator and $r_1, r_2$ as defined in \eqref{defr}. Let $\|v_k\|_{m,1} \leq N$, $k=1,2$. Then for every $p \geq 1 $ there exists a constant $\theta_1= \theta_1(E,D, N,m,p)$ such that
\begin{equation} \label{mainesth}
\||\lambda|^j |r_2 - r_1|\|_{L^p(\C)} \leq \theta_1 \log(3 + \delta^{-1})^{-(m-2)},
\end{equation}
for $j=-1,0,1$, $\delta = \|\Phi_2(E) - \Phi_1(E)\|_*$. Moreover, there exists a constant $\theta_2 = \theta_2(D,N,m,p)$ such that for any $0 < \kappa < \frac{1}{l+2}$, where $l = \mathrm{diam}(D)$, and for $|E| \geq E_1$ we have
\begin{align} \label{mainesth2}
\||\lambda|^j |r_2 - r_1|\|_{L^p(\C)} &\leq \theta_2 \bigg[|E|^{-1} \left(|E|^{1/2}+\kappa \log(3+\delta^{-1})\right)^{-(m-2)} \\ \nonumber
&\quad + \frac{\delta(3+\delta^{-1})^{\kappa(l+2)}}{|E|^{1/2p}}e^{|E|^{1/2}(l+2)}\bigg], \quad j=-1,0,1.
\end{align}
In addition, there exists a constant $\theta_3 = \theta_3(D,N,m,p)$ such that for $E, \delta$ which satisfy
\begin{equation} \label{h2}
|E|^{1/2} > \log(3+\delta^{-1}),
\end{equation}
we have
\begin{align} \label{mainesth3}
\||\lambda|^j |r_2 - r_1|\|_{L^p(\C)} \leq \theta_3 \bigg[|E|^{-m/2}\log(3+\delta^{-1})^{-(m-2)} + \frac{\delta}{|E|^{1/2p}}  e^{|E|(l+2)}\bigg],
\end{align}
for $j=-1,0,1$.
\end{prop}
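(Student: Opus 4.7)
The plan is to bound $\||\lambda|^j(r_2 - r_1)\|_{L^p(\C)}$ by splitting the $\lambda$-plane at two cutoff radii $0 < a_1 < a_2$ chosen to satisfy the admissibility hypotheses of Lemma \ref{lemestr}, and to optimize these cutoffs differently for each of the three estimates. The three regions $\{|\lambda|<a_1\}$, $\{a_1\leq|\lambda|\leq a_2\}$, and $\{|\lambda|>a_2\}$ are handled as follows: on the inner and outer regions, where each $r_i$ is individually small by Lemma \ref{lemb}, I apply the triangle inequality $|r_2 - r_1| \leq |r_1| + |r_2|$ and invoke Lemma \ref{lemestr}, getting bounds of order $|E|^{-m/2}a_1^{m-1+j+2/p}$ and $|E|^{-m/2}a_2^{-m-1+j+2/p}$. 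On the middle annulus I use $|r_2-r_1|=\pi|b_2-b_1|/|\lambda|$ together with Lemma \ref{lemdifh} to obtain the pointwise bound
\[
|r_2-r_1| \leq c\,\delta\,\frac{\exp\bigl((l+1)|E|^{1/2}(|\lambda|+1/|\lambda|)\bigr)}{|\lambda|},
\]
whose $L^p$-norm over $\{a_1\leq|\lambda|\leq a_2\}$ is controlled by $\delta$ times the exponential evaluated at the endpoints $|\lambda|=a_1$ or $|\lambda|=a_2$, multiplied by a polynomial $L^p$-weight factor in $a_1, a_2$.

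For \eqref{mainesth}, since $E$ is fixed I take $a_2 = \gamma\log(3+\delta^{-1})$ with $\gamma=\gamma(E,l)>0$ chosen small enough that $2(l+1)|E|^{1/2}\gamma<1$, and $a_1 = 1/a_2$. The middle contribution then becomes $\delta\,(3+\delta^{-1})^{2(l+1)|E|^{1/2}\gamma}$, a positive power of $\delta$ dominated by any $\log^{-(m-2)}(3+\delta^{-1})$, while the inner/outer contributions are of order $a_2^{-(m-2)}$ in the worst case of $(j,p)$, giving the claimed logarithmic rate. For \eqref{mainesth2}, the constants must be independent of $E$, so I instead choose $a_1, a_2$ at scale of order one with a correction proportional to $\log(3+\delta^{-1})/|E|^{1/2}$, in such a way that $(l+1)|E|^{1/2}(a_2+1/a_1)$ is dominated by $(l+2)|E|^{1/2}+\kappa(l+2)\log(3+\delta^{-1})$; the middle exponential then contributes $\exp(|E|^{1/2}(l+2))(3+\delta^{-1})^{\kappa(l+2)}$, and the $L^p$-weight factor over the annulus produces the $|E|^{-1/(2p)}$ in the error term. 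The inner/outer contributions, combined with $a_2|E|^{1/2}\gtrsim|E|^{1/2}+\kappa\log(3+\delta^{-1})$, give $|E|^{-m/2}a_2^{-(m-2)}\leq|E|^{-1}(|E|^{1/2}+\kappa\log(3+\delta^{-1}))^{-(m-2)}$, using $|E|\geq E_1\geq 1$ and $m>2$. Finally, for \eqref{mainesth3}, under the hypothesis \eqref{h2} I take $a_2$ proportional to $\log(3+\delta^{-1})$ with an $E$-independent proportionality constant and $a_1 = 1/a_2$; the middle exponential $\exp(2(l+1)|E|^{1/2}a_2) = \exp(c|E|^{1/2}\log(3+\delta^{-1}))$ is then at most $\exp((l+2)|E|)$ because $|E|^{1/2}\log(3+\delta^{-1})\leq|E|$ under \eqref{h2}, and the inner/outer contributions yield $|E|^{-m/2}\log^{-(m-2)}(3+\delta^{-1})$.

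The principal technical obstacle is the careful balancing in \eqref{mainesth2}: one must simultaneously verify the admissibility constraints of Lemma \ref{lemestr} uniformly in $|E|\geq E_1$, choose $a_1, a_2$ so that $(l+1)|E|^{1/2}(a_2+1/a_1)$ matches $(l+2)|E|^{1/2}+\kappa(l+2)\log(3+\delta^{-1})$ up to the stated constants, and estimate the $L^p$-weight over the annulus precisely enough to obtain the announced $|E|^{-1/(2p)}$ factor without introducing spurious growth in $|E|$ in the exponential.
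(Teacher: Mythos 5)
Your proposal follows essentially the same route as the paper: the same three-region decomposition of the $\lambda$-plane, Lemma \ref{lemestr} on the inner and outer regions, Lemma \ref{lemdifh} on the middle annulus, and the same three choices of cutoffs $a_2=1/a_1$ (proportional to $\log(3+\delta^{-1})$ for \eqref{mainesth} and \eqref{mainesth3}, and $1+\kappa\log(3+\delta^{-1})/|E|^{1/2}$ for \eqref{mainesth2}). The only detail you leave implicit is the removal of the smallness restriction on $\delta$ needed to ensure $a_2>1$ in the first and third cases, which the paper handles via the a priori boundedness of $\||\lambda|^k r_j\|_{L^p(\C)}$ coming from Lemma \ref{lemestr}.
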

\begin{proof}
Let choose $0<a_1 \leq 1 \leq a_2$ to be determined and let 
\begin{equation} \label{defdelta}
\delta = \|\Phi_2(E) - \Phi_1(E)\|_*.
\end{equation}
We split down the left term of \eqref{mainesth} as follows:
\begin{align*}
\| |\lambda|^j|r_2 - r_1|\|_{L^p(\C)} &\leq \||\lambda|^j|r_2 - r_1|\|_{L^p(|\lambda| <a_1)} +\| |\lambda|^j|r_2 - r_1|\|_{L^p(a_1<|\lambda| <a_2)}\\
&\quad + \| |\lambda|^j|r_2 - r_1|\|_{L^p(|\lambda| >a_2)}.
\end{align*}
From \eqref{estlem21} and \eqref{estlem22} we have
\begin{align}\label{esta}
\| |\lambda|^j |r_2 - r_1|\|_{L^p(|\lambda| <a_1)} &\leq c(N,p,m)  |E|^{-m/2} a_1^{m-1+j+2/p}, \\ \label{estb}
\||\lambda|^j |r_2 - r_1|\|_{L^p(|\lambda|>a_2)} &\leq c(N,p,m)  |E|^{-m/2} a_2^{-m-1+j+2/p}.
\end{align}
From Lemma \ref{lemdifh} and \eqref{defdelta} we obtain, for $j=-1,0,1$,
\begin{equation} \label{estc}
\| |\lambda|^j |r_2 - r_1|\|_{L^p(a_1<|\lambda| <a_2)} \leq c(D,N,p)\frac{\delta}{|E|^{1/2p}} \left(e^{(\sqrt{|E|}l+2)/a_1}+ e^{(\sqrt{|E|}l+2)a_2}\right).
\end{equation}

We now prove \eqref{mainesth}. Fix an energy $E < 0$ satisfying the hypothesis and define
\begin{equation}
a_2 = \frac{ 1}{ a_1} = \beta \log(3+\delta^{-1}),
\end{equation}
for $0<\beta< 1/(l\sqrt{|E|}+2)$. We choose $\delta_{\beta}(E) < 1$ such that for every $\delta \leq \delta_{\beta}(E)$, $a_2 >1$ (and so $a_1< 1$). Note that since $E_1 > (2R)^2$, the estimates in Lemma \ref{lemestr} hold for $a_1<1$ and $a_2 >1$.

The aim is to have \eqref{esta}, \eqref{estb} of the order $\log(3+\delta^{-1})^{-(m-2)}$. Indeed we have, for every $p\geq 1$ and $\delta \leq \delta_{\beta}(E)$,
\begin{align*}
a_1^{m-1+j+ 2/p} \leq c(\beta)\log(3+\delta^{-1})^{-(m-2)}, \quad a_2^{-m-1+j+2/p} \leq c(\beta) \log(3+\delta^{-1})^{-(m-2)},
\end{align*}
for $j=-1,0,1$.
Thus, for $\delta \leq \delta_{\beta}(E)$,
\begin{align*}
\| |\lambda|^j |r_2 - r_1|\|_{L^p(\C)} &\leq c(D,N,m,p,\beta)\bigg[|E|^{-m/2}\log(3+\delta^{-1})^{-(m-2)} \\
&\quad + \frac{\delta}{|E|^{1/2p}}  (3+\delta^{-1})^{\beta (\sqrt{|E|}l+2) }\bigg].
\end{align*}
Since by construction $\beta (\sqrt{|E|}l+2) <1$, we have that
\begin{align} \label{estexp}
\frac{\delta}{|E|^{1/2p}}  (3+\delta^{-1})^{\beta (\sqrt{|E|}l+2) } \to 0 \quad \text{for } \delta \to 0
\end{align}
more rapidly than the other term, at fixed $E$. This gives
\begin{equation} \label{estrdir}
\||\lambda|^j| r_2 - r_1\|_{L^p(\C)} \leq c(E,D,N,m,p,\beta)\left(\log(3+\delta^{-1})\right)^{-(m-2)},
\end{equation}
for $\delta \leq \tilde \delta_{\beta}(E)$ (where $\tilde \delta_{\beta}(E)$ is sufficiently small in order to estimate the term in \eqref{estexp}).
Estimate \eqref{estrdir} for general $\delta$ (with modified constant) follows from \eqref{estrdir} for $\delta \leq \tilde \delta_{\beta}(E)$ and the fact that $\||\lambda|^k |r_j|\|_{L^{p}(D)} < c(D,N,p)$, for $j=1,2$, $k=-1,0,1$ and $p \geq 1$: this follows from Lemma \ref{lemestr} (using the fact that $|E| > R$): indeed the estimate of Lemma \ref{lemb} hold for every $\lambda \in \C$, since $|E| > R$.\smallskip

In order to prove \eqref{mainesth2} we define, in \eqref{esta}-\eqref{estc},
\begin{equation}
a_2 = \frac{ 1}{ a_1} = 1 +\frac{\kappa \log(3+\delta^{-1})}{|E|^{1/2}},
\end{equation}
for any $0<\kappa< \frac{1}{l+2}$. Note that we have $a_2 > 1$ and $a_1 < 1$. Thus we find, for every $p\geq 1$, $j=-1,0,1$,
\begin{align*}
a_1^{m-1+j+ 2/p} \leq \frac{|E|^{(m-2)/2}}{\left(|E|^{1/2}+\kappa \log(3+\delta^{-1})\right)^{m-2}},\\
a_2^{-m-1+j+2/p} \leq \frac{|E|^{(m-2)/2}}{\left(|E|^{1/2}+\kappa \log(3+\delta^{-1})\right)^{m-2}}.
\end{align*}
We have also that
\begin{align*}
e^{(\sqrt{|E|}l+2)/a_1}+e^{(\sqrt{|E|}l+2)a_2} &\leq 2 e^{(l+2)\left(|E|^{1/2} + \kappa \log(3+\delta^{-1})\right)}\\
&= 2 (3+\delta^{-1})^{\kappa(l+2)}e^{(l+2)|E|^{1/2}}.
\end{align*}
Repeating the same arguments as above we obtain, for $\delta > 0$,
\begin{align*}
\| |\lambda|^j|r_2 - r_1|\|_{L^p(\C)} &\leq c(D,N,m,p)\bigg[|E|^{-1}\left(|E|^{1/2}+\kappa \log(3+\delta^{-1})\right)^{-(m-2)} \\
&\quad + \frac{\delta(3+\delta^{-1})^{\kappa(l+2)}}{|E|^{1/2p}} e^{(l+2)|E|^{1/2}}\bigg],
\end{align*}
which proves estimate \eqref{mainesth2}.\smallskip

We pass to estimate \eqref{mainesth3}. Take, in \eqref{esta}-\eqref{estc},
\begin{equation}
a_2 = \frac{ 1}{ a_1} = \log(3+\delta^{-1}).
\end{equation}
Define $\tilde \delta < 1$ such that for $\delta \leq \tilde \delta$ we have $a_2 > 1$ (so $a_1 < 1$).
From our assumption \eqref{h2} we have that $e^{(\sqrt{|E|}l+2)/a_1}+e^{(\sqrt{|E|}l+2)a_2} < 2 e^{|E|(l+2)}$. Then we obtain, using the same arguments as above,
\begin{align*}
 \||\lambda|^j |r_2 - r_1|\|_{L^p(\C)} \leq c(D,N,m,p) \bigg[|E|^{-m/2}\log(3+\delta^{-1})^{-(m-2)} + \frac{\delta}{|E|^{1/2p}}  e^{|E|(l+2)}\bigg],
\end{align*}
for $\delta \leq \tilde \delta$. To remove this last assumption we argue as for \eqref{mainesth}. This completes the proof of Proposition \ref{proprdir}.
\end{proof}

\section{Proof of Theorem \ref{maintheo}} \label{secrv}
We begin with a lemma which generalises \cite[Proposition 4.2]{S} to negative energy.

\begin{lem} \label{lemestmu}
Let $E < 0$ be such that $|E|\geq  E_1$, where $E_1$ is defined in Proposition \ref{proprdir}; let $v_1, v_2$ be two potentials satisfying \eqref{cv1}, \eqref{cv2}, with $\|v_j\|_{m,1} \leq N$, $\mu_1(z,\lambda), \mu_2(z,\lambda)$ the corresponding Faddeev functions and $r_1, r_2$ as defined in \eqref{defr}, \eqref{defr2}. Let $1 < s < 2$, and $\tilde s$ such that $1/\tilde s = 1/s - 1/2$. Then
\begin{align} \label{estmula1}
&\sup_{z \in \C} \|\mu_2(z,\cdot) - \mu_1(z,\cdot) \|_{L^{\tilde s}(\C)} \leq c(D,N,s,m) \| r_2 -r_1\|_{L^s(\C)},\\ \label{estmula2}
&\sup_{z \in \C} \left\|\frac{\partial \mu_2(z,\cdot)}{\partial \bar z} - \frac{\partial \mu_1(z,\cdot)}{\partial \bar z} \right\|_{L^{\tilde s}(\C)}\! \! \! \! \! \! \leq c(D,N,s,m)\Bigg[  \| r_2 -r_1\|_{L^s(\C)} \\ \nonumber
&\qquad+ |E|^{1/2}\left(\left\| \left(|\lambda| + \frac{1}{|\lambda|}\right)|r_2 - r_1|\right\|_{L^s(\C)} + \| r_2 -r_1\|_{L^s(\C)}  \right) \Bigg].
\end{align}
\end{lem}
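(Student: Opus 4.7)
The plan is to base both estimates on the $\bar\partial$-equation \eqref{dbar}: subtracting the equations for $\mu_1$ and $\mu_2$ yields
\begin{equation*}
\frac{\partial(\mu_2-\mu_1)}{\partial\bar\lambda}(z,\lambda) = r_1(z,\lambda)\,\overline{(\mu_2-\mu_1)(z,\lambda)} + (r_2-r_1)(z,\lambda)\,\overline{\mu_2(z,\lambda)},
\end{equation*}
which fits the template of Lemma \ref{lemtech} with $q_1=r_1(z,\cdot)$ and $q_2=(r_2-r_1)(z,\cdot)\overline{\mu_2(z,\cdot)}$. Two observations make this applicable: the exponent in the definition \eqref{defr2} of $r(z,\lambda)$ is purely imaginary for $E<0$, so $|r_j(z,\lambda)|=|r_j(\lambda)|$ uniformly in $z$, and Lemma \ref{lemestr} (with $j=0$, $a_1=a_2=1$) gives $\|r_1(z,\cdot)\|_{L^p(\C)}\leq c(D,N,m)|E|^{-m/2}$ for every $p\geq 1$, so one may pick $1<s_1<2<s_2<\infty$ with $\|q_1\|_{L^{s_1}}+\|q_1\|_{L^{s_2}}\leq c(D,N,m)$; moreover $|E|\geq E_1\geq(2R)^2$ forces $|k(\lambda)|\geq R$ for every $\lambda\neq 0$, so Lemma \ref{lem21} delivers $\|\mu_2\|_{L^\infty(\C^2)}\leq c(D,N)$ and hence $\|q_2\|_{L^s}\leq c(D,N)\|r_2-r_1\|_{L^s}$. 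Invoking Lemma \ref{lemtech} produces \eqref{estmula1}.

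For \eqref{estmula2} I would differentiate the $\bar\partial$-equation in $\bar z$, but the $z$-dependence of $r(z,\lambda)$ couples $U:=\partial(\mu_2-\mu_1)/\partial\bar z$ with $V:=\partial(\mu_2-\mu_1)/\partial z$, because $\partial\overline{(\mu_2-\mu_1)}/\partial\bar z=\overline{\partial(\mu_2-\mu_1)/\partial z}=\bar V$. The result is
\begin{equation*}
\frac{\partial U}{\partial\bar\lambda}=r_1\bar V+S_1,\qquad\frac{\partial V}{\partial\bar\lambda}=r_1\bar U+S_2,
\end{equation*}
with
\begin{equation*}
S_1=\frac{\partial r_1}{\partial\bar z}\overline{(\mu_2-\mu_1)}+\frac{\partial(r_2-r_1)}{\partial\bar z}\bar\mu_2+(r_2-r_1)\overline{\frac{\partial\mu_2}{\partial z}},
\end{equation*}
and $S_2$ obtained by interchanging $\partial_{\bar z}$ and $\partial_z$. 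To decouple, I would pass to $\Phi:=U+V$ and $\Psi:=U-V$, which satisfy $\partial\Phi/\partial\bar\lambda=r_1\bar\Phi+(S_1+S_2)$ and $\partial\Psi/\partial\bar\lambda=-r_1\bar\Psi+(S_1-S_2)$, each fitting Lemma \ref{lemtech}; then $U=(\Phi+\Psi)/2$ yields the bound on $U$.

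It then remains to estimate $\|S_1\|_{L^s}+\|S_2\|_{L^s}$. A direct computation from \eqref{defr2} gives $\partial r(z,\lambda)/\partial\bar z=-\frac{1}{2}\sqrt{|E|}(\lambda-1/\bar\lambda)r(z,\lambda)$, whence $|\partial r/\partial\bar z|\leq\frac{1}{2}|E|^{1/2}(|\lambda|+|\lambda|^{-1})|r(\lambda)|$, and similarly for $\partial_z r$. Applying H\"older's inequality \eqref{holder} term by term: the first summand of $S_1$, with exponents $(2,\tilde s)$, is controlled via Lemma \ref{lemestr} ($j=\pm 1$, $p=2$) and the already-proven \eqref{estmula1}, yielding a contribution $c\|r_2-r_1\|_{L^s}$; the second, using $\|\mu_2\|_\infty\leq c(D,N)$, yields $c|E|^{1/2}\|(|\lambda|+|\lambda|^{-1})|r_2-r_1|\|_{L^s}$; the third, using $\|\partial\mu_2/\partial z\|_\infty\leq c(D,N)$ from Lemma \ref{lem21}, yields another $c\|r_2-r_1\|_{L^s}$. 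Symmetric contributions from $S_2$ assemble into \eqref{estmula2}.

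The chief technical obstacle is the $z/\bar z$ coupling in the differentiated equation, which is absent at zero energy and is dealt with here by the sum/difference trick; a minor prerequisite is to check a priori that $\Phi,\Psi\in L^{\tilde s}(\C)$, as required by Lemma \ref{lemtech}, which follows from the decay $\mu_j(z,\lambda)\to 1$ and $\nabla_z\mu_j\to 0$ as $|\lambda|\to 0$ or $\infty$ guaranteed by Lemma \ref{lem21}.
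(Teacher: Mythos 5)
Your proposal is correct and follows essentially the same route as the paper: the difference $\nu=\mu_2-\mu_1$ satisfies the non-homogeneous $\bar\partial$-equation to which Lemma \ref{lemtech} is applied for \eqref{estmula1}, and for \eqref{estmula2} the paper likewise decouples the $\partial_z$/$\partial_{\bar z}$ system via the sum and difference $\mu_z\pm\mu_{\bar z}$ (your $\Phi,\Psi$ are exactly its $\tau_+$ and $-\tau_-$), with the same source terms and the same H\"older/Lemma \ref{lemestr}/Lemma \ref{lem21} bounds. The only cosmetic difference is the order of operations (you subtract the two potentials before differentiating and decoupling, the paper after), and note that your first summand of $S_1$ actually contributes $c\,|E|^{1/2}\|r_2-r_1\|_{L^s}$ rather than $c\,\|r_2-r_1\|_{L^s}$, which is still consistent with the stated bound.
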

\begin{proof}
We begin with the proof of \eqref{estmula1}. Let 
\begin{align} \label{defnu}
\nu(z,\lambda) &= \mu_2(z,\lambda) - \mu_1(z,\lambda).
\end{align}

From the $\bar \partial$-equation \eqref{dbar} we deduce that $\nu$ satisfies the following non-homogeneous $\bar \partial$-equation:
\begin{align} \label{dbar1}
\frac{\partial}{\partial \bar \lambda}\nu(z,\lambda) = r_1(z,\lambda)\overline{\nu(z,\lambda)}+ (r_2(z,\lambda) - r_1(z,\lambda))\overline{\mu_2(z,\lambda)},
\end{align}
for $\lambda \in \C$. Note that from Lemma \ref{lem21} we have that $\nu(z,\cdot) \in L^{\tilde s}(\C)$ for every $2 < \tilde s \leq \infty$. In addition, from Lemma \ref{lemb} (using the fact that $|E| > R$), we have that $\|r_j\|_{L^{p}(D)} < c(D,N,p,m)$, for $ 1 < p < \infty$, $j=1,2$. Then it is possible to use Lemma \ref{lemtech} in order to obtain
\begin{align*}
\|\nu(z,\cdot)\|_{L^{\tilde s}} &\leq c(D,N,s,m)\left\|\overline{\mu_2(z,\lambda)} (r_2(\lambda) - r_1(\lambda))\right\|_{L^s(\C)}\\
&\leq c(D,N,s,m) \sup_{z \in \C}\|\mu_2(z,\cdot)\|_{L^{\infty}} \left\| r_2 - r_1\right\|_{L^s(\C)}\\
&\leq c(D,N,s,m) \left\| r_2 - r_1\right\|_{L^s(\C)},
\end{align*}
and the constant is independent from $E$ for $|E| > R$, because of Lemma \ref{lem21} and Lemma \ref{lemb}.\smallskip

Now we pass to \eqref{estmula2}. To simplify notations we write, for $z,\lambda \in \C$,
\begin{equation} \nonumber
\mu_z^j(z,\lambda) = \frac{\partial \mu_j(z,\lambda)}{\partial z}, \quad \mu_{\bar z}^j(z,\lambda) = \frac{\partial \mu_j(z,\lambda)}{\partial \bar z},\quad j=1,2.
\end{equation}

From the $\bar \partial$-equation \eqref{dbar} we have that $\mu_z^j$ and $\mu_{\bar z}^j$ satisfy the following system of non-homogeneous $\bar \partial$-equations, for $j=1,2$:
\begin{align*}
\frac{\partial}{\partial \bar \lambda}\mu_z^j(z,\lambda) = r_j(z,\lambda)\left(\overline{\mu_{\bar z}^j(z,\lambda)}+\frac{\sqrt{|E|}}{2}\left(\bar \lambda -\frac 1 \lambda\right) \overline{\mu_j(z,\lambda)}\right),\\
\frac{\partial}{\partial \bar \lambda}\mu_{\bar z}^j(z,\lambda) = r_j(z,\lambda)\left(\overline{\mu_{ z}^j(z,\lambda)}+\frac{\sqrt{|E|}}{2}\left(\frac{1}{ \bar \lambda} - \lambda\right) \overline{\mu_j(z,\lambda)}\right).
\end{align*}
Define now $\mu_{\pm}^j(z,\lambda) = \mu_z^j(z,\lambda) \pm \mu_{\bar z}^j(z,\lambda)$, for $j=1,2$. 
Then they satisfy the following two non-homogeneous $\bar \partial$-equations:
\begin{equation} \nonumber
\frac{\partial}{\partial \bar \lambda}\mu_{\pm}^j(z,\lambda)= r_j(z,\lambda) \left( \pm \overline{\mu_{\pm}^j(z,\lambda)}+\frac{\sqrt{|E|}}{2}\left(  \left(\bar \lambda -\frac 1 \lambda\right)\pm \left(\frac{1}{ \bar \lambda} - \lambda\right)\right) \overline{\mu_j(z,\lambda)}\right).
\end{equation}
Finally define $\tau_{\pm}(z,\lambda) = \mu_{\pm}^2(z,\lambda)- \mu_{\pm}^1(z,\lambda)$. They satisfy the two non-homogeneous $\bar \partial$-equations below:
\begin{align*}
&\frac{\partial}{\partial \bar \lambda}\tau_{\pm}(z,\lambda) = \bigg[ \pm \left( r_1(z,\lambda)\overline{\tau_{\pm}(z,\lambda)}+ (r_2(z,\lambda)-r_1(z,\lambda))\overline{\mu_{\pm}^2(z,\lambda)} \right) \\ \nonumber
&\quad + \frac{\sqrt{|E|}}{2}\left(  \left(\bar \lambda -\frac 1 \lambda\right)\pm \left(\frac{1}{ \bar \lambda} - \lambda\right)\right) \left((r_2(z,\lambda)-r_1(z,\lambda))\overline{\mu_2(z,\lambda)}+ r_1(z,\lambda)\overline{\nu(z,\lambda)}\right) \bigg],
\end{align*}
where $\nu(z,\lambda)$ was defined in \eqref{defnu}.

Now remark that by Lemma \ref{lem21} and regularity assumptions on the potentials we have that $\mu_z^j(z,\cdot), \mu_{\bar z}^j(z,\cdot) \in L^{\tilde s}(\C) \cap L^{\infty}(\C)$ for any $\tilde s > 2$, $j=1,2$ (and their norms are bounded by a constant $C(D,N,p,m)$ thanks to Lemma \ref{lem21}). This, in particular, yields $\tau_{\pm}(z,\cdot) \in L^{\tilde s}(\C)$. These arguments, along with the above remarks on the $L^p$ boundedness of $r_j$, make possible to use Lemma \ref{lemtech}, which gives
\begin{align*}
&\|\tau_{\pm}(z,\cdot)\|_{L^{\tilde s}(\C)}\\
&\quad\leq c(D,N,s,m)\Bigg[\|r_2-r_1 \|_{L^s(\C)}\|\mu_{\pm}^2(z,\cdot)\|_{L^{\infty}(\C)}\\
&\qquad +\sqrt{|E|}\Bigg(\left\| \left(|\lambda| + \frac{1}{|\lambda|}\right)|r_2 - r_1|\right\|_{L^s(\C)}\|\mu_2(z,\cdot)\|_{L^{\infty}(\C)}\\
&\qquad + \| (|\lambda|+|\lambda|^{-1})r_1\|_{L^2(\C)}\|\nu(z,\cdot)\|_{L^{\tilde s}(\C)} \Bigg)\Bigg]\\ 
&\quad \leq c(D,N,s,m)\Bigg[\|r_2-r_1\|_{L^s(\C)}+\sqrt{|E|}\Bigg(\left\| \left(|\lambda| + \frac{1}{|\lambda|}\right)|r_2 - r_1|\right\|_{L^s(\C)}\\
&\qquad + \||\lambda||r_2-r_1|\|_{L^s(\C)} \Bigg)\Bigg],
\end{align*}
where we used H\"older's inequality \eqref{holder} (since $1/s = 1/2 + 1/ \tilde s$) and estimate \eqref{estmula1}. Again, the constants are independent from $E$ since $|E| > R$.

The proof of \eqref{estmula2} now follows from this last inequality and the fact that $\mu^2_{\bar z}-\mu^1_{\bar z} = \frac{1}{2}(\tau_+ -\tau_-)$.
\end{proof}
\begin{rem}
We also have proved that
\begin{align*}
&\sup_{z \in \C} \left\|\frac{\partial \mu_2(z,\cdot)}{\partial z} - \frac{\partial \mu_1(z,\cdot)}{\partial z} \right\|_{L^{\tilde s}(\C)}\! \! \! \! \! \! \leq c(D,N,s,m)\Bigg[  \| r_2 -r_1\|_{L^s(\C)} \\ \nonumber
&\qquad+ |E|^{1/2}\left(\left\| \left(|\lambda| + \frac{1}{|\lambda|}\right)|r_2 - r_1|\right\|_{L^s(\C)} + \| r_2 -r_1\|_{L^s(\C)}  \right) \Bigg].
\end{align*}
\end{rem}

\begin{proof}[Proof of Theorem \ref{maintheo}]
We recall the derivation of an explicit formula for the potential, taken from \cite{N3}.

Let $v(z)$ be a potential which satisfies the hypothesis of Theorem \ref{maintheo} and $\mu(z,\lambda)$ the corresponding Faddeev functions. Since $\mu(z,\lambda)$ satisfies the estimates of Lemma \ref{lem21}, the $\bar \partial$-equation \eqref{dbar} and $b(\lambda,E)$ decreases at infinity like in Lemma \ref{lemb}, it is possible to write the following development:
\begin{equation} \label{devel}
\mu(z,\lambda) = 1 + \frac{\mu_{-1}(z)}{\lambda}+O\left(\frac{1}{|\lambda|^2}\right), \qquad \lambda \to \infty,
\end{equation}
for some function $\mu_{-1}(z)$. If we insert \eqref{devel} into equation \eqref{equa}, for $\psi(z,\lambda) = e^{-\frac{\sqrt{|E|}}{2} \left(z/\lambda + \bar z \lambda\right)} \mu(z,\lambda)$, we obtain, letting $\lambda \to \infty$,
\begin{equation}
v(z) = -2 |E|^{1/2} \frac{\partial \mu_{-1}(z)}{\partial z}, \qquad z \in \C.
\end{equation}
More explicitly, we have, as a consequence of \eqref{dbar},
\begin{equation} \nonumber
\mu(z,\lambda)-1=\frac{1}{2\pi i}\int_{\C}\frac{r(z,\lambda')}{\lambda'-\lambda}\overline{\mu(z,\lambda')}d\lambda' \, d \bar \lambda'.
\end{equation}
By Lebesgue's dominated convergence (using Lemma \ref{lemb}) we obtain
\begin{equation}\nonumber
\mu_{-1}(z)=-\frac{1}{2\pi i}\int_{\C}r(z,\lambda)\overline{\mu(z,\lambda)}d\lambda \, d \bar \lambda,
\end{equation}
and the explicit formula
\begin{equation} \label{expl}
v(z) = \frac{|E|^{1/2}}{\pi i}\int_{\C}r(z,\lambda)\left(\frac{|E|^{1/2}}{2}\left( \bar \lambda -\frac 1 \lambda \right)\overline{\mu(z,\lambda)}+\overline{\left(\frac{\partial \mu(z,\lambda)}{\partial \bar z}\right)}\right)d\lambda\, d\bar \lambda.
\end{equation}
Formula \eqref{expl} for $v_1$ and $v_2$ yields
\begin{align*}
v_2(z)-v_1(z)&=\frac{|E|^{1/2}}{\pi i}\int_{\C}\Bigg[\frac{|E|^{1/2}}{2}\left( \bar \lambda -\frac 1 \lambda \right)\big((r_2 - r_1) \overline{\mu_2}+ r_1 (\overline{\mu_1}-\overline{\mu_2})\big)\\
&\quad +(r_2-r_1)\overline{\left(\frac{\partial \mu_2}{\partial \bar z}\right)}+r_1 \overline{\left(\frac{\partial \mu_2}{\partial \bar z}-\frac{\partial \mu_1}{\partial \bar z}\right)}\Bigg]d\lambda\, d\bar \lambda.
\end{align*}
Then, using several times H\"older's inequality \eqref{holder}, we find
\begin{align*}
|v_2(z)-v_1(z)|&\leq \frac{|E|^{1/2}}{\pi}\Bigg[\frac{|E|^{1/2}}{2}\Bigg(\left\|\left(\bar \lambda - \frac 1 \lambda \right)(r_2-r_1)\right\|_{L^1}\|\mu_2(z,\cdot)\|_{L^{\infty}}\\
&\quad+\left\|\left(\bar \lambda - \frac 1 \lambda \right)r_1\right\|_{L^{\tilde p'}}\|\mu_2(z,\cdot) - \mu_1(z,\cdot)\|_{L^{\tilde p}}\Bigg)\\
&\quad + \|r_2-r_1\|_{L^p}\left\|\frac{\partial \mu_2(z,\cdot)}{\partial \bar z} \right\|_{L^{p'}}\\
&\quad + \|r_1\|_{L^{\tilde p'}}\left\| \frac{\partial \mu_2(z,\cdot)}{\partial \bar z}-\frac{\partial \mu_1(z,\cdot)}{\partial \bar z}\right\|_{L^{\tilde p}}\Bigg],
\end{align*}
for $1<p<2$, $\tilde p$ such that $1/\tilde p = 1 / p - 1/2$ and $1/p + 1/p' = 1/{\tilde p} + 1/{\tilde p'} = 1$. From Lemmas \ref{lemestmu}, \ref{lem21} and \ref{lemestr} we obtain
\begin{align*}
|v_2(z)-v_1(z)|&\leq c(D,N,m,p)|E|^{1/2}\Bigg[|E|^{1/2}\Bigg(\left\|\left(\bar \lambda - \frac 1 \lambda \right)(r_2-r_1)\right\|_{L^1}\\
&\quad + \sum_{k=-1}^1 \| |\lambda|^k|r_2 - r_1|\|_{L^p(\C)}\Bigg)+\|r_2-r_1\|_{L^p}\Bigg].
\end{align*}
Now Proposition \ref{proprdir} gives
\begin{equation}
\|v_2 - v_1\|_{L^{\infty}(D)}\leq c(E,D,N,m)(\log(3+\|\Phi_2(E)-\Phi_1(E)\|_*^{-1}))^{-(m-2)},
\end{equation}
which is \eqref{est1}. Estimates \eqref{est2} and \eqref{est3} are also obtained as a consequence of the above inequality and Proposition \ref{proprdir}.
This finishes the proof of Theorem \ref{maintheo}.
\end{proof}

\end{document}